\newtheorem{proposition}{Proposition}[section]
\newtheorem*{thm_main}{Main Theorem}
\newtheorem*{thm_1}{Theorem 1}
\newtheorem*{cor_1}{Corollary 1}
\newtheorem*{cor_2}{Corollary 2}
\newtheorem*{cor_3}{Corollary 3}
\newtheorem{lemma}[proposition]{Lemma}
\theoremstyle{definition}
\newtheorem{definition}[proposition]{Definition}
\newcommand{\tb}{\text{tb}}
\newcommand{\rot}{\text{rot}}
\newcommand{\s}{\text{s}\,}
\newcommand{\g}{\text{g}}
\newcommand{\HFhat}{\widehat{HF}}
\begin{document}
\title[Satellite operators with distinct iterates]{Satellite operators with distinct iterates in smooth concordance}

\author{Arunima Ray$^{\dag}$}
\address{Department of Mathematics, Rice University MS-136\\
6100 Main St. P.O. Box 1892\\
Houston, TX 77251-1892}
\email{arunima.ray@rice.edu}
\urladdr{www.math.rice.edu/$\sim$ar25}

\thanks{$^{\dag}$Partially supported by NSF--DMS--1309081 and the Nettie S.\ Autrey Fellowship (Rice University)}
\date{\today}

\subjclass[2000]{57M25}

\begin{abstract}
Let $P$ be a knot in an unknotted solid torus (i.e.\ a \textit{satellite operator} or \textit{pattern}), $K$ a knot in $S^3$ and $P(K)$ the satellite of $K$ with pattern $P$. For any satellite operator $P$, this correspondence gives a function $P:\mathcal{C} \rightarrow \mathcal{C}$ on the set of smooth concordance classes of knots. We give examples of winding number one satellite operators $P$ and a class of knots $K$, such that the iterated satellites $P^i(K)$ are distinct as smooth concordance classes, i.e.\ if $i \neq j \geq 0$, $P^i(K) \neq P^j(K)$, where each $P^i$ is unknotted when considered as a knot in $S^3$. This implies that the operators $P^i$ give distinct functions on $\mathcal{C}$, providing further evidence for the fractal nature of $\mathcal{C}$. There are several other applications of our result, as follows. By using topologically slice knots $K$, we obtain infinite families $\{P^i(K)\}$ of topologically slice knots that are distinct in smooth concordance. We can also obtain infinite families of 2--component links (with unknotted components and linking number one) which are not smoothly concordant to the positive Hopf link. For a large class of \textit{$L$--space knots} $K$ (including the positive torus knots), we obtain infinitely many prime knots $\{P^i(K)\}$ which have the same Alexander polynomial as $K$ but are not themselves $L$--space knots.
\end{abstract}

\maketitle

\section{Introduction}

A \textit{knot} is the image of a smooth embedding $S^1 \hookrightarrow S^3$. The satellite construction is a well-known function on $\mathcal{K}$, the set of isotopy classes of knots. Briefly, a satellite operator $P$ is a knot in a solid torus and the satellite knot $P(K)$ is obtained by tying the solid torus into the knot $K$. An example of the satellite construction is shown in Figure \ref{satellite}; a more precise definition is given in Section \ref{satelliteoperators}. 

\begin{figure}[h]
  \centering
  \includegraphics[width=5in]{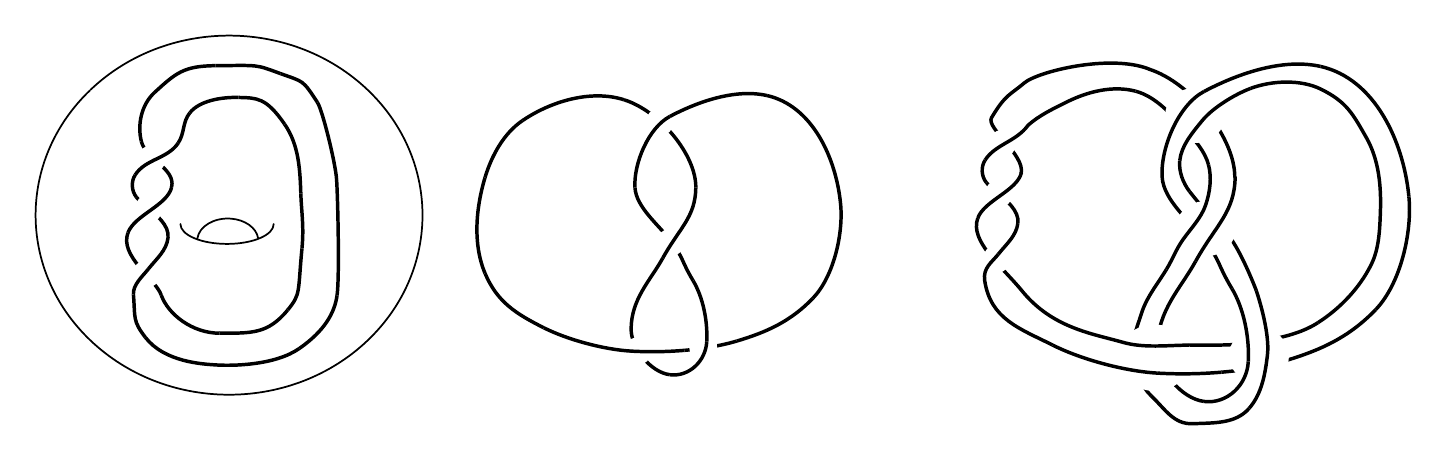}
  \put(-4.3,-0.20){$P$}
  \put(-2.75,-0.20){$K$}  
  \put(-1.05,-0.20){$P(K)$}  
  \caption{The satellite construction on knots.}\label{satellite}
\end{figure}

Two knots $K_0 \hookrightarrow S^3\times \{0\}$ and $K_1\hookrightarrow S^3\times\{1\}$ are said to be \textit{concordant} if they cobound a smooth, properly embedded annulus in $S^3\times [0,1]$. $\mathcal{K}$ modulo concordance forms an abelian group called the \textit{knot concordance group}, denoted by $\mathcal{C}$. Similarly, we say that two knots are \textit{exotically concordant} if they cobound a smooth, properly embedded annulus in a smooth 4--manifold \textit{homeomorphic} to $S^3\times [0,1]$ (but not necessarily \textit{diffeomorphic}). $\mathcal{K}$ modulo exotic concordance forms an abelian group called the \textit{exotic knot concordance group}, denoted by $\mathcal{C}^\text{ex}$. If the 4--dimensional smooth Poincar\'e Conjecture is true, we can see that $\mathcal{C}=\mathcal{C}^\text{ex}$ \cite{CDR14}. The satellite operation on knots descends to well-defined functions on $\mathcal{C}$ and $\mathcal{C}^\text{ex}$. 

Satellite knots are interesting both within and without knot theory. Satellite operations can be used to construct distinct knot concordance classes which are hard to distinguish using classical invariants, such as in \cite{CHL11,COT04}. In \cite{CFHeHo13}, winding number one satellite operators are used to construct non-concordant knots with homology cobordant zero--surgery manifolds. Satellite operations were used in \cite{H08} to subtly modify a 3--manifold without affecting its homology type. Winding number one satellite operators in particular are related to Mazur 4--manifolds \cite{AkKir79} and Akbulut corks \cite{Ak91}. 

There has been considerable interest in understanding the action of satellite operators on $\mathcal{C}$. For instance, it is a famous conjecture that the Whitehead double of a knot $K$ is smoothly slice if and only if $K$ is smoothly slice \cite[Problem 1.38]{kirbylist}. This question might be generalized to ask if operators are \textit{injective}, that is, given an operator $P$, does $P(K)=P(J)$ imply $K=J$ in smooth concordance? A survey of some recent work on the Whitehead doubling operator may be found in \cite{HeK12}. In \cite{CHL11}, several `robust doubling operators' were introduced and evidence was provided for their injectivity. Not much else is known in the winding number zero case. For operators with nonzero winding numbers, there has been more success. Recently Cochran, Davis and the author proved the following result.

\begin{thm_1}[Theorem 5.1 of \cite{CDR14}]If $P$ is a strong winding number one satellite operator, the induced function $P:\mathcal{C}^\text{ex}\rightarrow\mathcal{C}^\text{ex}$ is injective, i.e.\ for any two knots $K$ and $J$, $P(K)=P(J)$ if and only if $K=J$ in $\mathcal{C}^\text{ex}$. If the 4--dimensional smooth Poincar\'{e} Conjecture is true, $P:\mathcal{C}\rightarrow \mathcal{C}$ is injective.\end{thm_1}

The notion of a `strong winding number one' satellite operator is described in Section \ref{satelliteoperators}. In particular, any winding number one operator which is unknotted as a knot in $S^3$ is strong winding number one; see Figure \ref{P} for an example. 

\begin{figure}[t]
\centering
\includegraphics[width=2.5in]{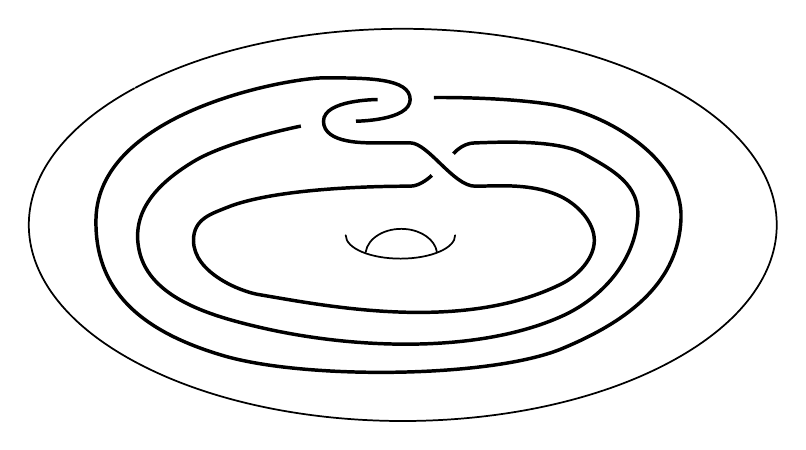}
\caption{The satellite operator $P$.}\label{P}
\end{figure}

Theorem 1 is related to the possibility of $\mathcal{C}$ having a \textit{fractal} structure. This was conjectured in \cite{CHL11} where some evidence was provided to support this theory. One may characterize `fractalness' of a set as the existence of \textit{self-similarities at arbitrarily small scales}. By Theorem 1, any strong winding number one satellite operator gives a self-similarity for $\mathcal{C}^\text{ex}$; however, while there exist several such operators (see \cite[Section 2]{CDR14}), the question of scale has not been addressed. This is the objective of the main theorem of this paper, which follows. 

\begin{thm_main}For any strong winding number one satellite operator $P$ with a Legendrian diagram where $\tb(P)>0$ and $\tb(P)+\rot(P)\geq 2$, (e.g.\ the one shown in Figure \ref{P}) and any knot $K$ with $\tb(K)=2\g(K)-1$, the knots $P^i(K)$ are distinct in $\mathcal{C}^\text{ex}$ and $\mathcal{C}$. That is, $P^i(K)\neq P^j(K)$ for all $i\neq j\geq 0$ in $\mathcal{C}$ and $\mathcal{C}^\text{ex}$. 

In particular, for the pattern $P$ in Figure \ref{P} and knots $K$ as above, $$\tau(P^i(K))=\g_4(P^i(K))=\g(P^i(K))=\g(K)+i=\g_4(K)+i=\tau(K)+i$$
\end{thm_main}

Examples of knots $K$ with the property that $\tb(K)=2\g(K)-1$ are plentiful. Any knot which is the closure of a positive braid (such as the positive torus knots) has this property. If a knot satisfies this condition, so does its untwisted Whitehead double \cite{Rud95}. In addition, this property is preserved under connected sum of knots. Such knots have the additional property that $\g(K)=\g_4(K)=\tau(K)$ (this is easily seen using the slice--Bennequin inequality---see Proposition \ref{slben}).

In addition to the operator $P$ shown in Figure \ref{P}, the main theorem applies to several other satellite operators. Two infinite families of such patterns are given in Figures \ref{leg_pattern_Qi} and \ref{leg_pattern_Ri}; see Proposition \ref{exactnumber_QR} for exact calculations of various invariants for these families. 

The action on $\mathcal{C}$ by the operators in the main theorem should be compared to shrinking the Cantor ternary set by a factor of three, namely that each iteration gives a distinct image of $\mathcal{C}$ at smaller and smaller scales. To complete the fractal analogy one must also address the question of \textit{surjectivity} of strong winding number one operators; some progress towards this end has been achieved by Davis and the author in \cite{DR13}. 

The question of whether the iterates of a satellite operator are distinct is an interesting question in its own right. In particular, there is no known counterpart of our main theorem for the Whitehead doubling operator. It was recently shown in \cite{KBPark14} that for the torus knots $T_{2,\,2m+1}$ with $m>2$, $\text{Wh}(T_{2,\,2m+1})$ and $\text{Wh}^2(T_{2,\,2m+1})$ are independent in $\mathcal{C}$; however, we are still unable to distinguish any of the other iterated Whitehead doubles of any knots.

\subsection{Applications of the main theorem}

Recall that given any knot $J$ with $\tb(J)=2\g(J)-1$, the untwisted Whitehead double of $K$ has the same property. Therefore, since any untwisted Whitehead double is topologically slice, we have several examples of topologically slice knots $K$ that we may use in our main theorem. For any of the operators $P$ as in the main theorem, each $P^i(K)$ will be topologically slice (since $P$ is unknotted as a knot in $S^3$) and $P^i(K)\neq P^j(K)$ for all $i\neq j\geq 0$ in smooth (as well as exotic) concordance. This gives us the following corollary.

\begin{cor_1}There exist infinite families of smooth (and exotic) concordance classes of topologically slice knots, where given any two knots in a family, one is a satellite of the other.\end{cor_1}

Several examples of infinite families of smooth concordance classes of topologically slice knots exist in the literature, such as in \cite{En95, Gom86, HeK12, Hom11}. Our examples are novel only due to the ease with which they are obtained and the added property that they are iterated satellites. 

We can also obtain an interesting corollary about \textit{$L$--space knots}, which are defined as follows. A homology sphere $Y$ is an $L$--space if $\HFhat(Y)$ is the same as that of a lens space (here $\HFhat$ is the Heegaard--Floer invariant introduced in \cite{OzSz04}). A knot $K$ is called an \textit{$L$--space knot} if some positive integer surgery on $S^3$ along $K$ yields an $L$--space. All positive torus knots, i.e.\ the knots $T_{p,\,q}$ with $p,q>0$, are well-known $L$--space knots, since $pq\pm1$ surgery on them yield lens spaces. $L$--space knots have received much interest lately since their knot Floer complexes may be computed directly from their Alexander polynomials \cite{OzSz05}. It was also shown in \cite{OzSz05} that there are strong restrictions on the Alexander polynomial of $L$--space knots. Since several $L$--space knots, we obtain the following corollary.

\begin{cor_2}For any $L$--space knot $K$ with $\tb(K)>0$, there exist infinitely many prime knots with the same Alexander polynomial as $K$ which are not themselves $L$--space knots.\end{cor_2}

As a third application of the main theorem, we can construct infinitely many links which are not smoothly concordant to the Hopf link. Any 2--component link $L=(P,\eta)$ with $\eta$ unknotted gives a satellite operator by considering the knot $P$ in the solid torus $S^3 - N(\eta)$, where $N(\eta)$ is a regular neighborhood of $\eta$. It is easy to see that if two such links are concordant they give identical functions on $\mathcal{C}^\text{ex}$ \cite[Proposition 2.3]{CDR14}. Given a satellite operator $P$, we may consider the associated 2--component link $(P,\eta(P))$ where $\eta(P)$ is the meridian of the solid torus containing $P$. 

\begin{cor_3}For any operator $P$ in the main theorem, the associated links $(P^i,\eta(P^i))$ yield distinct concordance classes of links with linking number one and unknotted components, which are each distinct from the class of the positive Hopf link. \end{cor_3}

In addition, we know from \cite[Corollary 2.2]{CFHeHo13} that if a winding number one satellite operator $P$ is unknotted (such as the one in Figure \ref{P}), then the zero--surgery manifolds on $P(K)$ and $K$ are homology cobordant, for any knot $K$. Recall that the $n$--solvable, positive, negative, and bipolar filtrations of $\mathcal{C}$ (from \cite{COT04} and \cite{CHHo13}, denoted by $\{\mathcal{F}_n\}$, $\{\mathcal{P}_n\}$, $\{\mathcal{N}_n\}$, and $\{\mathcal{B}_n\}$ respectively) can be defined in terms of the zero--surgery manifolds of knots. Therefore, if we start with a knot $K$ in $\mathcal{F}_n/\mathcal{F}_{n-1}$ (resp. $\mathcal{P}_n/\mathcal{P}_{n-1}$ or $\mathcal{B}_n/\mathcal{B}_{n-1}$) with $\tb(K)=2\g(K)-1$ and an unknotted operator $P$ for which the main theorem applies, we obtain a family $\{P^i(K)\}$ of infinitely many classes of knots also in $\mathcal{F}_n/\mathcal{F}_{n-1}$ (resp. $\mathcal{P}_n/\mathcal{P}_{n-1}$ or $\mathcal{B}_n/\mathcal{B}_{n-1}$). Since if $K$ has $\tb(K)=2\g(K)-1$ it has $\tau(K)>0$, we cannot directly use the same construction for $\{\mathcal{N}_n\}$, but the mirror images of the examples for $\{\mathcal{P}_n\}$ suffice. 


\subsection{Acknowledgements}
The author would like to thank Tim Cochran, Jung Hwan Park, and Christopher Davis for their time and insightful discussions. We are also indebted to the participants of the Heegaard Floer ``computationar'' held at Rice University in Spring 2014, particularly Allison Moore and Eamonn Tweedy, for their insights into $L$--space knots and Heegaard--Floer homology.

\section{Background}\label{background}

\subsection{Satellite operators}\label{satelliteoperators}
A \textit{satellite operator}, or \textit{pattern}, is a knot in the standard unknotted solid torus $V=S^1\times D^2$. The \textit{winding number} of a satellite operator $P$, denoted by $w(P)$, is the signed count of the number of intersections of $P$ with a generic meridional disk of $V$.

The set of satellite operators is a monoid in the following natural way. Given a satellite operator $P$ in a solid torus $V$, we see the following curves:
\begin{itemize}
\item $\mu(P)$, the meridian of $P$ within $V$,
\item $\lambda(P)$, the longitude of $P$ within $V$, 
\item $m(P)$, the meridian of $V$, and
\item $\ell(P)$, the longitude of $V$. 
\end{itemize}

Given operators $P$, $Q$ in solid tori $V(P)$, $V(Q)$, we construct the composed pattern $P\star\, Q$ as follows. Let $N(Q)$ be a regular neighborhood of $Q$ inside $V(Q)$. Glue $V(Q) - N(Q)$ and $V(P)$ by identifying $\mu(Q)\sim m(P)$ and $\lambda(Q)\sim \ell(P)$. The resulting 3--manifold is a solid torus. The image of $P$ inside this solid torus is the desired operator $P\star\, Q$. An example of this construction is shown in Figure \ref{monoidop}.

\begin{figure}[b]
  \centering
  \includegraphics[width=5in]{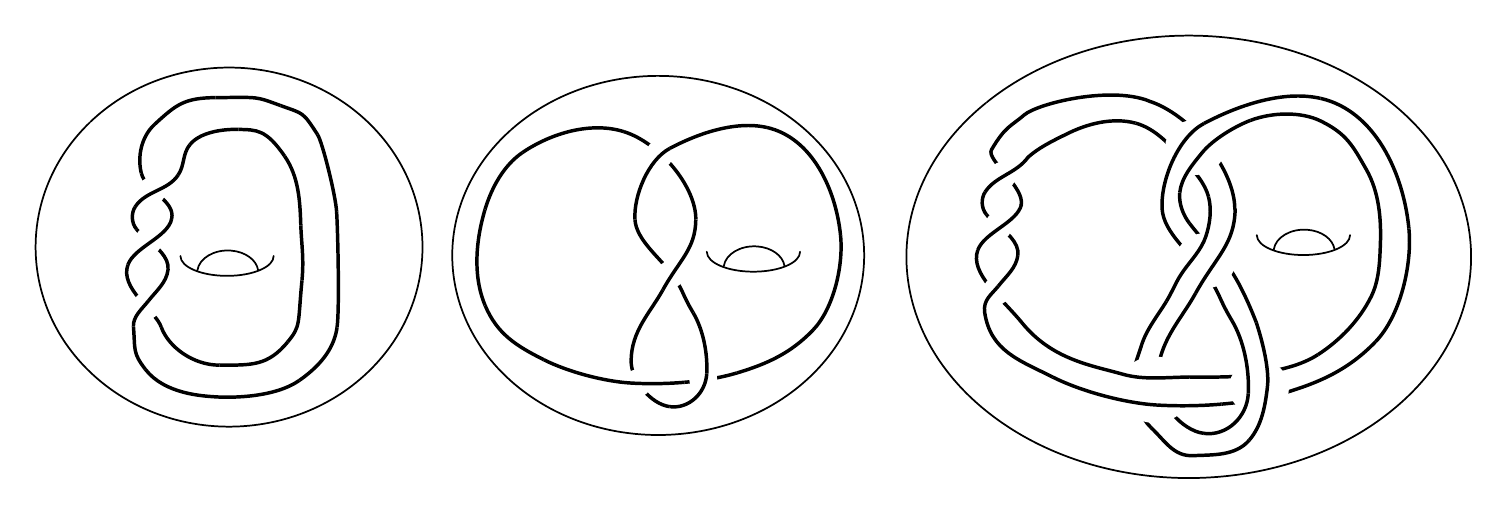}
  \put(-4.42,-0.20){$P$}
  \put(-2.9,-0.20){$Q$}  
  \put(-1.15,-0.20){$P\star Q$}  
  \caption{The monoid operation on satellite operators.}\label{monoidop}
\end{figure}

The well-known action of satellite operators on knots is closely related to the above construction. Given a knot $K$ and a satellite operator $P$ in a solid torus $V$, we obtain the satellite knot $P(K)$ as follows. Denote the meridian of $K$ by $\mu(K)$ and the longitude by $\lambda(K)$. Let $N(K)$ be a tubular neighborhood of $K$. Glue $S^3 - N(K)$ and $V$ by identifying $\mu(K)\sim m(P)$ and $\lambda(K)\sim\ell(P)$. The resulting 3--manifold is $S^3$ and the image of $P$ inside this manifold is the knot $P(K)$. An example of this construction is given in Figure \ref{satellite}. For a survey of the satellite construction see \cite[p. 10]{Lic97} or \cite[p. 111]{Ro90}.

It is easily seen that $(P\star\, Q)(K)=P(Q(K))$, i.e.\, the satellite construction gives a monoid action on $\mathcal{K}$. We denote the satellite operator $P\star P \star \cdots \star P$ by $P^i$. Therefore, $P^i(K)=P(P(\cdots(K)\cdots))=(P\star P \star \cdots \star P)K$, i.e.\ we get the same result whether we start with a knot $K$ and apply $P$ to it $i$ times or we apply the composed pattern $P\star P \star \cdots \star P$ to $K$ once.

Given a satellite operator $P$, we denote by $\widetilde{P}$ the knot $P(U)$, where $U$ is the unknot. $P$ is said to be \textit{strong winding number one} \cite[Definition 1.1]{CDR14} if $\mu(P)$ normally generates $\pi_1(S^3 - \widetilde{P})$. If $\widetilde{P}$ is unknotted, $H_1(S^3 - \widetilde{P})$ and $\pi_1(S^3 - \widetilde{P})$ are canonically isomorphic and thereofore $P$ is strong winding number one if and only if it winding number one \cite[Proposition 2.1]{CDR14}.

If the knots $K_0$ and $K_1$ are concordant in any 4--manifold $M$ bounded by two disjoint copies of $S^3$, the satellites $P(K_0)$ and $P(K_1)$ are concordant in $M$ for every operator $P\subseteq V$. This is easily seen as follows. Let $C$ be the concordance between $K_0$ and $K_1$. We excise a neighborhood of $C$ and glue in $V(P)\times [0,1]$. The image of $P\times [0,1]$ in the resulting manifold (which is diffeomorphic to $M$) is a concordance between $P(K_0)$ and $P(K_1)$. As a result, the satellite construction is well-defined on $\mathcal{C}$ and $\mathcal{C}^\text{ex}$. 

\subsection{Legendrian knots and the slice--Bennequin inequality}

\begin{figure}[b]
  \begin{picture}(3.4,2)
  \put(0,0.52){\includegraphics[width=1.5in]{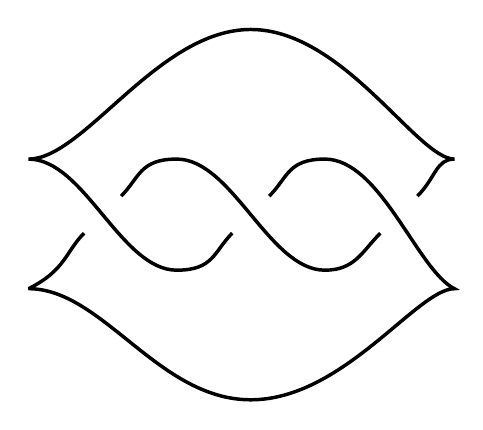}}
  \put(0.45,0){$\tb(K)=1$}
  \put(1.9,0.25){\includegraphics[width=1.5in]{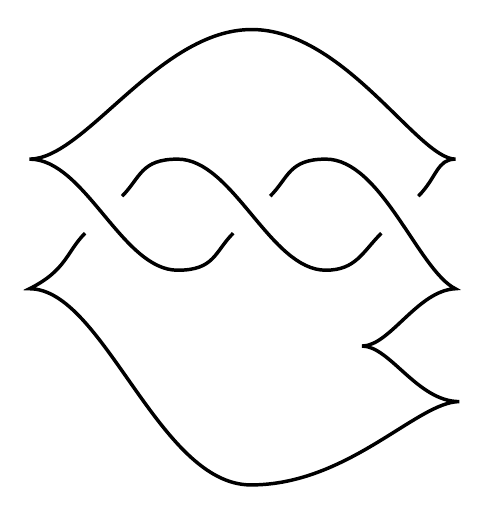}}
  \put(2.30,0){$\tb(K)=0$}
  \end{picture}
  \caption{Two different Legendrian realizations of the right-handed trefoil.}\label{leg_trefoils}
\end{figure}

An embedding of a knot $K$ in $S^3$ is said to be \textit{Legendrian} if at each point in $S^3$ it is tangent to the 2--planes of the standard contact structure on $S^3$. Legendrian knots can be studied concretely through their front projections as in Figure \ref{leg_trefoils}. Legendrian knots have two classical invariants, the \textit{Thurston--Bennequin number}, $\tb(\cdot)$, and the \textit{rotation number}, $\rot(\cdot)$, both of which can be easily calculated via front projections. See \cite{Etn05} for an excellent review of these and related concepts. 

Given a Legendrian knot with positive Thurston--Bennequin number, we may repeatedly stabilize at the cost of increasing the rotation number to get a Legendrian diagram with zero Thurston--Bennequin number; such a diagram is a different Legendrian knot, but has the same topological realization as the original knot. See Figure \ref{leg_trefoils} for an example.

We will make use of the slice--Bennequin inequality \cite{Rud95,Rud97}\cite[pp. 133]{Etn05}, which states that for any Legendrian knot $K$, 
$$\tb(K)+|\rot(K)|\leq 2\g_4(K)-1.$$
Here $\g_4(\cdot)$ stands for the smooth 4--genus of a knot, i.e.\ the least genus of a connected, oriented, smooth, properly embedded surface bounded by $K$ in $B^4$. Since some of our work will be in the exotic category, we show that the slice--Bennequin inequality has an exotic analog. 

\begin{definition}The \textit{exotic 4--genus} of a knot $K$, denoted by $g_4^\text{ex}(K)$, is the least genus of a connected, oriented, smooth, properly embedded surface bounded by $K$ in a manifold $\mathcal{B}$ where $\partial\mathcal{B}=S^3$ and $\mathcal{B}$ is homeomorphic (but not necessarily diffeomorphic) to $B^4$.\end{definition}

Exotic 4--genus is clearly an invariant of exotic concordance of knots, and is bounded above by the (classical) smooth 4--genus.

\begin{proposition}[Exotic slice--Bennequin equality]\label{slben}For a Legendrian knot $K$ in $S^3$ $$\tb(K)+|\rot(K)|\leq \s(K)-1 \leq 2g_4^\text{ex}(K)-1\leq 2g_4(K)-1$$ $$\tb(K)+|\rot(K)|\leq 2\tau(K)-1 \leq 2g_4^\text{ex}(K)-1\leq 2g_4(K)-1$$where $\s(K)$ is Rasmussen's invariant from Khovanov homology and $\tau(K)$ is Ozsv\'{a}th--Szab\'{o}'s invariant from Heegaard--Floer homology.\end{proposition}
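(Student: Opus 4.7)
The plan is to verify each chain by breaking it into three pieces. The rightmost inequality $2g_4^\text{ex}(K)-1 \leq 2g_4(K)-1$ is immediate from the definition of exotic 4--genus, since the standard $B^4$ is one allowable choice of $\mathcal{B}$.

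For the leftmost inequalities $\tb(K)+|\rot(K)| \leq \s(K)-1$ and $\tb(K)+|\rot(K)| \leq 2\tau(K)-1$, I would invoke the refined slice--Bennequin inequalities due to Plamenevskaya (for the $\tau$ version) and Shumakovitch/Plamenevskaya (for the $\s$ version), which hold for any Legendrian knot in $S^3$. These are classically stated with $\rot(K)$ in place of $|\rot(K)|$; to obtain the absolute value version, reverse the Legendrian orientation of $K$, which negates $\rot(K)$ while leaving $\tb(K)$, $\tau(K)$, and $\s(K)$ unchanged, so the stated inequality applies to both signs of $\rot(K)$.

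The novel content is the middle inequalities $\tau(K) \leq g_4^\text{ex}(K)$ and $\s(K) \leq 2g_4^\text{ex}(K)$. My approach is to observe that a smooth 4--manifold $\mathcal{B}$ homeomorphic to $B^4$ is in particular a smooth integer homology 4--ball: $H_*(\mathcal{B};\mathbb{Z}) \cong H_*(B^4;\mathbb{Z})$ and $b_2^+(\mathcal{B})=0$. The usual slice-genus bounds for $\tau$ and $\s$ extend verbatim to surfaces in any such smooth homology 4--ball. For $\tau$, any properly embedded surface $\Sigma \subset \mathcal{B}$ bounding $K$ is nullhomologous (since $H_2(\mathcal{B})=0$), so the Ozsv\'{a}th--Szab\'{o} adjunction-type bound in the Heegaard Floer setup yields $|\tau(K)|\leq g(\Sigma)$. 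For $\s$, the corresponding extension of Rasmussen's genus bound to smooth homology 4--balls is known in the Khovanov homology literature; the key point is that Rasmussen's argument uses only the existence of the Khovanov cobordism map associated to $\Sigma$, which requires only that the ambient manifold be a smooth homology 4--ball, not the standard $B^4$. Taking the infimum of $g(\Sigma)$ over admissible pairs $(\mathcal{B},\Sigma)$ yields the desired middle inequalities.

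The main obstacle is accurately citing or reconstructing the extensions of the $\tau$-- and $\s$--genus bounds from the standard $B^4$ to smooth homology 4--balls. The $\tau$ extension is essentially standard in Heegaard Floer theory; the $\s$ extension requires more careful bookkeeping of the cobordism maps in Khovanov homology but is established in the literature. Once these are in place, the three pieces assemble into both chains, and the proof is complete.
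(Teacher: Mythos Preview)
Your overall decomposition into three pieces matches the paper's proof exactly, and your treatment of the leftmost and rightmost inequalities is fine; the paper likewise cites Plamenevskaya and Shumakovitch for $\tb(K)+|\rot(K)|\leq \s(K)-1$ and $\tb(K)+|\rot(K)|\leq 2\tau(K)-1$.

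The gap is in your justification of $\s(K)\leq 2g_4^{\text{ex}}(K)$. You assert that Rasmussen's argument goes through because ``the Khovanov cobordism map associated to $\Sigma$ \ldots\ requires only that the ambient manifold be a smooth homology 4--ball.'' This is not correct: Khovanov's functoriality is defined for link cobordisms in $S^3\times[0,1]$ (equivalently, for surfaces in the standard $B^4$), and there is no elementary way to attach a Khovanov cobordism map to a surface sitting in an exotic or merely homotopy $B^4$. Extending the $\s$--bound beyond the standard $B^4$ was a genuinely hard problem, and the result the paper invokes is Kronheimer--Mrowka's \emph{Gauge theory and Rasmussen's invariant}, which identifies $\s$ with a gauge-theoretic invariant and then uses instanton techniques to obtain the bound in homotopy $4$--balls. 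So the middle inequality for $\s$ cannot be obtained by the bookkeeping you describe; you need to cite Kronheimer--Mrowka (or reprove their result), exactly as the paper does.

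For $\tau$ your instinct is right---the Ozsv\'ath--Szab\'o genus bound does extend---but note that the relevant input is their adjunction-type theorem for surfaces in negative-definite $4$--manifolds (the paper cites Theorem~1.1 of their 2003 paper), specialized to a homotopy $4$--ball. Framing it as ``$\Sigma$ is nullhomologous in a homology ball'' is the right observation, but you should point to that specific theorem rather than leave it as ``essentially standard.''
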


\begin{proof}Corollary 1.1 of \cite{KronMrow13} shows that if $K$ bounds a connected, oriented, properly embedded surface $\Sigma$ in a homotopy 4--ball $\mathcal{B}$, then $\s(K)\leq 2\g(\Sigma)$. Similarly, from Theorem 1.1 of \cite{OzSz03}, in the special case of homotopy 4--balls, $\tau(K)\leq \g(\Sigma)$. From \cite{Plam04, Shuma07}, we know that 
$$\tb(K) + |\rot(K)|\leq \s(K)-1$$
and 
$$\tb(K) + |\rot(K)|\leq 2\tau(K)-1,$$
which completes the proof. \end{proof}

\subsection{The Legendrian satellite operation}

\begin{figure}[t]
\begin{picture}(4.4,2.65)
\put(0,0.75){\includegraphics{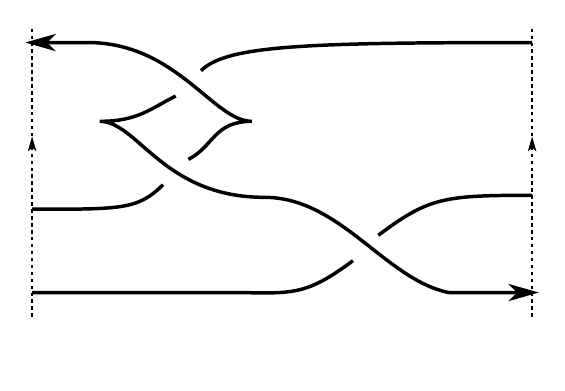}}
\put(0.62,0.6){$\tb(P)=2$}
\put(0.6,0.4){$\rot(P)=0$}
\put(0.9,0.1){(a)}
\put(2.4,0.45){\includegraphics{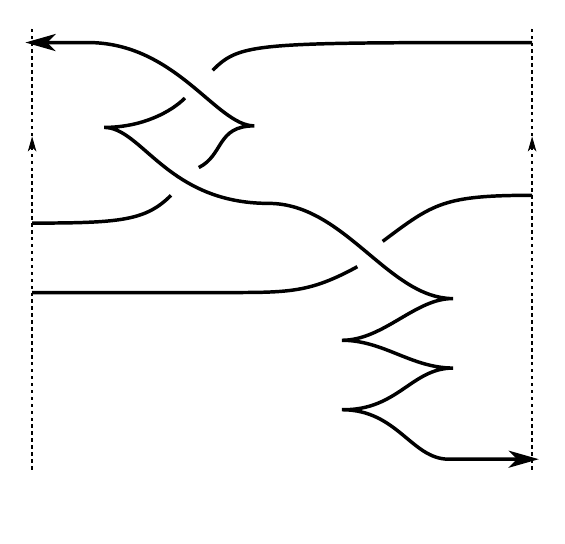}}
\put(3,0.6){$\tb(P)=0$}
\put(2.98,0.4){$\rot(P)=2$}
\put(3.3,0.1){(b)}
\end{picture}
\caption{Two Legendrian fronts for the pattern $P$ given in Figure \ref{P}. The dashed vertical lines are identified to yield a knot in $S^1\times \mathbb{R}^2$ endowed with its natural contact structure obtained as a quotient of $\mathbb{R}\times \mathbb{R}^2$.}\label{leg_patterns}
\end{figure}

\begin{figure}[b]
\begin{picture}(5,3)
\put(0,0.75){\includegraphics[width=2.25in]{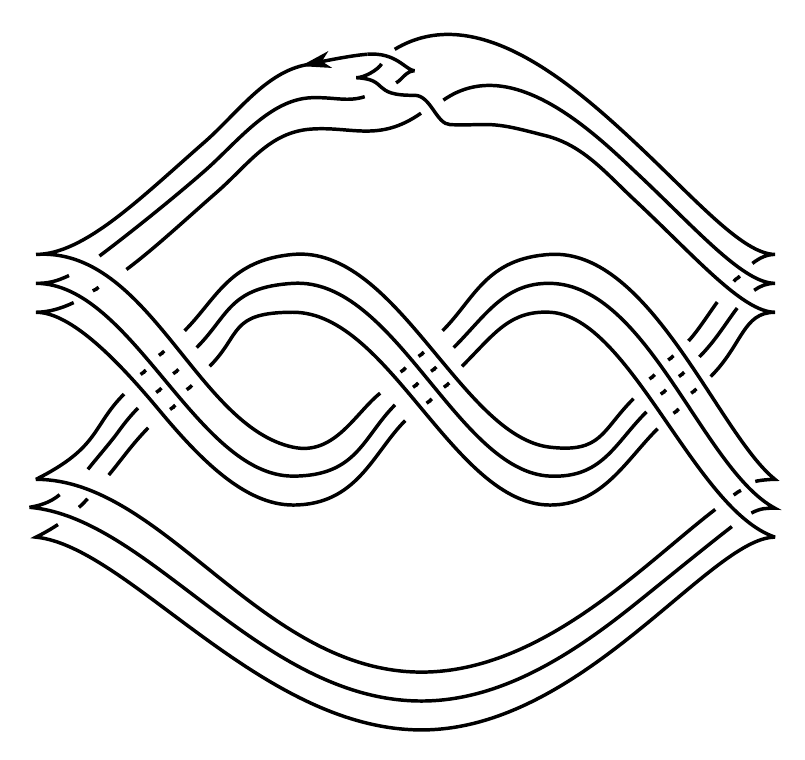}}
\put(1.12,0){(a)}
\put(2.7,0.25){\includegraphics[width=2.25in]{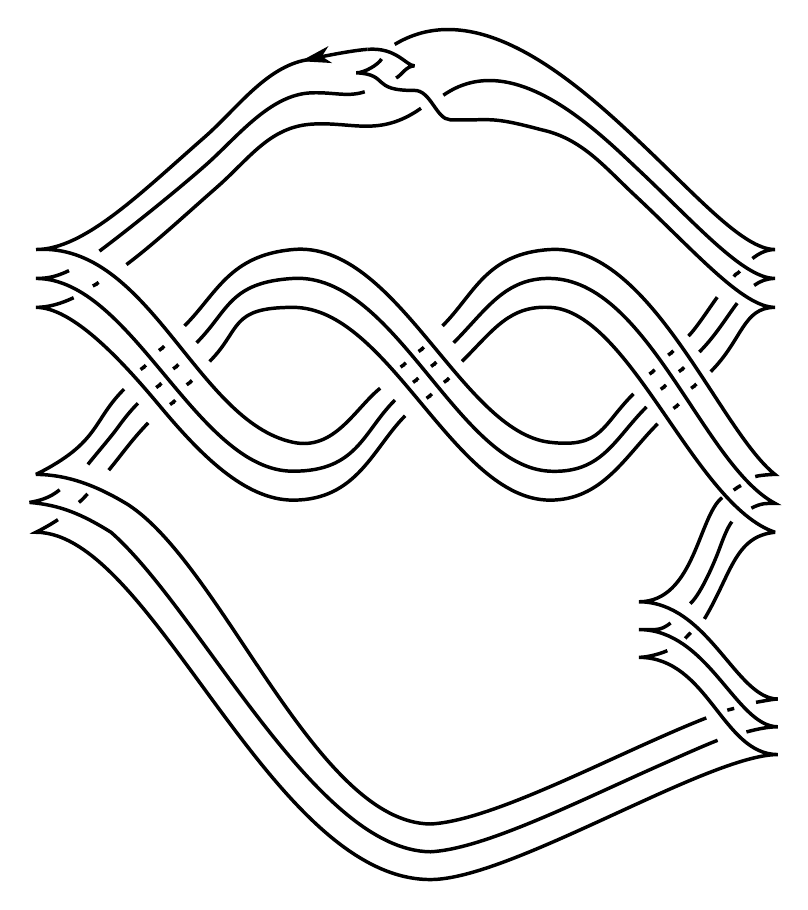}}
\put(3.82,0){(b)}
\end{picture}
\caption{The Legendrian satellite operation using $P$ in Figure \ref{leg_patterns}(a) with the Legendrian fronts shown in Figure \ref{leg_trefoils}. The knot in (a) is therefore the 1--twisted classical satellite of $K$ with pattern $P$ and the knot in (b) is the (classical) untwisted satellite of $K$ with pattern $P$.}\label{leg_satellite}
\end{figure}

The \textit{Legendrian satellite operation} is discussed in \cite{Ng01,NgTray04}. There, a Legendrian pattern $P$ in $S^1\times \mathbb{R}^2$ acts on a Legendrian knot $K$ in $\mathbb{R}^3$. A front diagram for a Legendrian pattern in shown in Figure \ref{leg_patterns}. In order to construct the (Legendrian) satellite knot $P(K)$ we take an $n$--copy of $K$ ($n$ `vertical' parallels of $K$) and insert $P$ in a strand of $K$, where $n$ is the number of strands of $P$. This process is described in Figure \ref{leg_satellite}. The resulting knot is the $\tb(K)$--twisted satellite of $K$ and $P$. If $\tb(K)=0$, the resulting knot is a Legendrian realization of the classical untwisted satellite knot $P(K)$. 

The same construction applies when a Legendrian pattern $P$ in $S^1\times \mathbb{R}^2$ acts on another pattern $Q$ also in $S^1\times \mathbb{R}^2$. This construction is described in Figure \ref{leg_Psquared}. The resulting operator $P\cdot Q$ is the $\tb(Q)$--twisted $P$--satellite of $Q$. Therefore, if $\tb(Q)=0$, $P\cdot Q$ corresponds to the operator $P\star Q$ described in Section \ref{satelliteoperators}. 

The following lemmata describe how the Thurston--Bennequin number and rotation number of satellites are related.

\begin{lemma}[Remark 2.4 of \cite{Ng01}]For a pattern $P$ and a knot $K$, 
\begin{equation}\label{tbformula}
\tb(P(K))=w(P)^2\tb(K)+\tb(P)
\end{equation}
\begin{equation}\label{rotformula}
\rot(P(K))=w(P)\rot(K)+\rot(P)\\
\end{equation}
\end{lemma}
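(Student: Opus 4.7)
The plan is to verify both identities by direct front-projection bookkeeping. Recall that for any Legendrian knot $L$, a front diagram gives $\tb(L) = \text{wr}(L) - \tfrac{1}{2} c(L)$ and $\rot(L) = \tfrac{1}{2}(c_D(L) - c_U(L))$, where $\text{wr}$ denotes writhe, $c$ the total number of cusps, and $c_D, c_U$ the numbers of down- and up-cusps. The Legendrian satellite $P(K)$ is constructed by taking an $n$-copy of the front of $K$---where $n$ is the total number of strands of $P$---and splicing $P$ into one of the parallel strands, as described in the preceding section.

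To establish the $\tb$ formula, I would count the crossings and cusps of the satellite front. The crossings arise from: (i) each crossing of $K$, which becomes $n^2$ crossings among the $n$ parallels with signed total $(n_+ - n_-)^2 = w(P)^2$ (since the parallels inherit orientations from $P$, with $n_\pm$ strands co- or counter-oriented to $K$), contributing $w(P)^2\,\text{wr}(K)$; (ii) crossings internal to $P$, contributing $\text{wr}(P)$; and (iii) extra crossings at each cusp of $K$ where the parallels swap order. The cusps of the satellite arise from $n$-fold replicas of cusps of $K$ together with cusps of $P$. The extra crossings from (iii) precisely absorb the discrepancy between the naive contribution $\tfrac{n}{2} c(K)$ and the desired $\tfrac{w(P)^2}{2} c(K)$, so that $\tb = \text{wr} - \tfrac{1}{2}c$ simplifies to $\tb(P(K)) = w(P)^2\,\tb(K) + \tb(P)$.

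For the $\rot$ formula, I would note that although each cusp of $K$ replicates to $n$ cusps in the copy, their signed contribution to $\rot$ picks up a factor of $w(P) = n_+ - n_-$ rather than $n$: the $n_-$ reverse-oriented parallels interchange down-cusps with up-cusps. Summing over the cusps of $K$ yields a contribution of $w(P)\,\rot(K)$, and the cusps inside the spliced copy of $P$ contribute $\rot(P)$, giving the stated formula.

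The hard part is the combinatorial accounting of the extra crossings and cusp swaps near the $n$-copies of cusps of $K$, particularly when $P$ has strands of mixed orientations. A cleaner and more intrinsic alternative identifies $\tb(L)$ with the linking number $\text{lk}(L, L_+)$ of $L$ with its Legendrian push-off and uses that the Legendrian satellite construction is designed so that $P(K)_+$ is precisely the satellite of $K_+$ along the Legendrian push-off $P_+ \subset V$ of $P$. The standard satellite linking-number formula in $S^3$ then gives $\text{lk}(P(K), P(K)_+) = w(P)^2\,\text{lk}(K, K_+) + \text{lk}_V(P, P_+) = w(P)^2\,\tb(K) + \tb(P)$, and an analogous degree-of-tangent-framing argument along a Seifert surface for $P(K)$ yields the rotation formula.
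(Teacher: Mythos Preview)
Your proposal is correct and follows essentially the same route the paper indicates: the paper does not prove this lemma at all but simply attributes it to Remark~2.4 of \cite{Ng01}, and for the companion lemma on patterns it says only that ``these relationships are easily checked using front diagrams.'' Your front-projection bookkeeping (writhe and cusp counts, with the $(n_+-n_-)^2$ and $n_+-n_-$ signed contributions at crossings and cusps of $K$) is exactly that check carried out in detail, and your alternative via $\tb(L)=\mathrm{lk}(L,L_+)$ is a legitimate and somewhat cleaner variant of the same idea.
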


\begin{lemma} For patterns $P$ and $Q$, 
\begin{equation}\label{tbpatternformula}
\tb(P\cdot Q)=w(P)^2\tb(Q)+\tb(P)
\end{equation}
\begin{equation}\label{rotpatternformula}
\rot(P\cdot Q)=w(P)\rot(Q)+\rot(P)\\
\end{equation}
\end{lemma}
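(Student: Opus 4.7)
The plan is to reduce the two pattern-composition formulas \eqref{tbpatternformula} and \eqref{rotpatternformula} to the knot formulas \eqref{tbformula} and \eqref{rotformula} by probing $P\cdot Q$ with an arbitrary Legendrian test knot, rather than redoing the front-projection bookkeeping that underlies the knot case.

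The two structural ingredients I would isolate first are: (i) \emph{associativity of the Legendrian satellite}, $(P\cdot Q)(\Lambda)=P(Q(\Lambda))$ as Legendrian knots in $\mathbb{R}^3$ for every Legendrian knot $\Lambda$, and (ii) \emph{multiplicativity of winding number}, $w(P\cdot Q)=w(P)\,w(Q)$. Property (i) is visible directly from the combinatorial $n$--copy description of the Legendrian satellite in Figures \ref{leg_satellite} and \ref{leg_Psquared}: inserting the composite $P\cdot Q$ into $\Lambda$ produces the same front as first inserting $Q$ into $\Lambda$ and then inserting $P$, because both procedures replace each strand of $\Lambda$ by the same packet of $w(P)w(Q)$--many Legendrian push--offs with the same spliced local diagrams of $P$ and $Q$. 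Property (ii) follows from the definition of winding number via a meridional disk: a generic meridional disk for the outer solid torus of $P\cdot Q$ meets the reglued region in $w(Q)$ parallel meridional disks of $V(P)$, each of which meets $P$ in $w(P)$ points counted with sign.

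Granted (i) and (ii), I would fix any Legendrian knot $\Lambda$ and compute $\tb((P\cdot Q)(\Lambda))$ in two ways. Applying \eqref{tbformula} with pattern $P\cdot Q$ gives
\[
\tb((P\cdot Q)(\Lambda))=w(P\cdot Q)^{2}\,\tb(\Lambda)+\tb(P\cdot Q)=w(P)^{2}w(Q)^{2}\,\tb(\Lambda)+\tb(P\cdot Q),
\]
while using (i) and applying \eqref{tbformula} twice gives
\[
\tb(P(Q(\Lambda)))=w(P)^{2}\tb(Q(\Lambda))+\tb(P)=w(P)^{2}w(Q)^{2}\,\tb(\Lambda)+w(P)^{2}\,\tb(Q)+\tb(P).
\]
Equating the two expressions and cancelling the $w(P)^{2}w(Q)^{2}\,\tb(\Lambda)$ term yields \eqref{tbpatternformula}. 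The argument for \eqref{rotpatternformula} is identical, substituting \eqref{rotformula} and using that rotation number is linear, not quadratic, in winding number, so that the $w(P)w(Q)\rot(\Lambda)$ terms cancel. The main obstacle will be to pin down (i) rigorously; this is essentially an observation about the combinatorial construction in \cite{NgTray04}, but worth stating explicitly so that the subsequent algebraic manipulation is unambiguous.
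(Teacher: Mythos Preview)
Your argument is correct, but it takes a different route from the paper. The paper's proof is a single sentence: it simply notes that, as in Remark~2.4 of \cite{Ng01}, the identities can be checked directly from the front diagrams of $P$ and $Q$ (i.e.\ by counting crossings and cusps in the composite front $P\cdot Q$). You instead bootstrap from the already-established knot formulas \eqref{tbformula}--\eqref{rotformula} via the associativity $(P\cdot Q)(\Lambda)=P(Q(\Lambda))$ and the multiplicativity $w(P\cdot Q)=w(P)w(Q)$, then cancel the $\Lambda$--dependent terms. This is a clean and conceptually pleasant reduction: it isolates exactly which structural facts force the pattern formulas, and it avoids repeating any cusp/crossing bookkeeping. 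The paper's approach, on the other hand, is shorter in practice since the front-diagram count for $P\cdot Q$ is essentially the same computation already done for $P(K)$.

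One small imprecision: in your sketch of (i) you refer to ``$w(P)w(Q)$--many Legendrian push--offs,'' but the $n$--copy in the Legendrian satellite uses the \emph{number of strands} of the pattern (the geometric intersection with a meridional disk in the front), not the winding number. This does not affect the validity of (i)---associativity holds regardless, and is most transparently seen via the contact-neighborhood description of the Legendrian satellite in \cite{Ng01,NgTray04}---but the justification you wrote should be phrased in terms of strand count rather than $w$.
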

\begin{proof} As in Remark 2.4 of \cite{Ng01}, these relationships are easily checked using front diagrams of $P$ and $Q$.\end{proof}


\begin{figure}[t]
\includegraphics[width=2.5in]{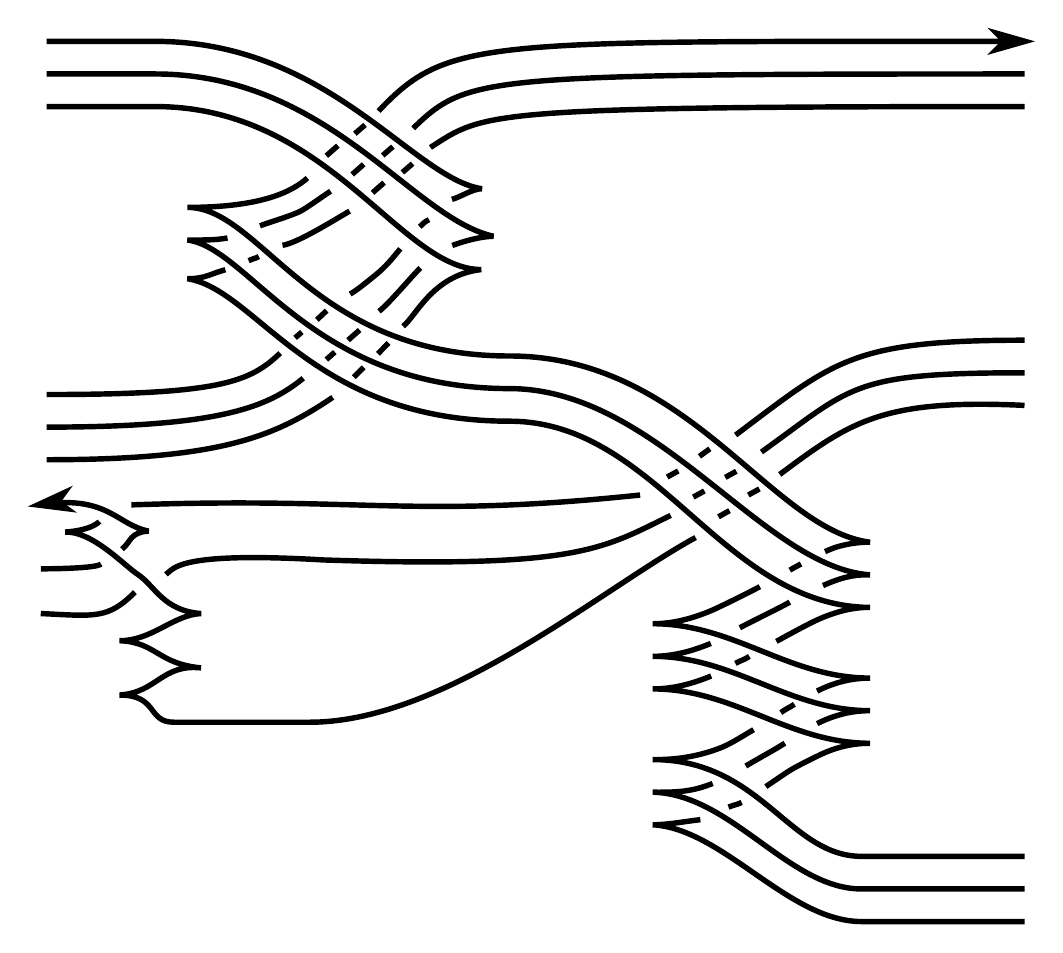}
\caption{The pattern $P^2$,                                       where $P$ is from Figure \ref{leg_patterns}(b).}\label{leg_Psquared}
\end{figure}
\section{Proof of the main theorem}\label{proof}

For the rest of this section, let $P$ denote the pattern shown in Figure \ref{leg_patterns}(b). We easily calculate that $\tb(P)=0$, $\rot(P)=2$ and $w(P)=1$. 

\begin{lemma}\label{tbrotpi}$\tb(P^i)=0$ and $\rot(P^i)=2i$.\end{lemma}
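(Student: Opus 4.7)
The plan is a straightforward induction on $i$, using the Legendrian satellite formulas (\ref{tbpatternformula}) and (\ref{rotpatternformula}) together with the input data $\tb(P)=0$, $\rot(P)=2$, $w(P)=1$ from Figure \ref{leg_patterns}(b). The base case $i=1$ is immediate from these numerical values.

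For the inductive step, assume $\tb(P^{i-1})=0$ and $\rot(P^{i-1})=2(i-1)$. Since $\tb(P^{i-1})=0$, the Legendrian satellite $P\cdot P^{i-1}$ is the untwisted satellite, which agrees with the topological composition $P\star P^{i-1}=P^i$ as noted just before Lemma 2.4 in the excerpt. Applying (\ref{tbpatternformula}) with $Q=P^{i-1}$ and $w(P)=1$ gives
\[
\tb(P^i)=w(P)^2\,\tb(P^{i-1})+\tb(P)=1\cdot 0+0=0,
\]
and applying (\ref{rotpatternformula}) gives
\[
\rot(P^i)=w(P)\,\rot(P^{i-1})+\rot(P)=1\cdot 2(i-1)+2=2i.
\]

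There is essentially no obstacle here; the only subtlety worth flagging is ensuring that the Legendrian composition really computes the topological iterate, which requires $\tb$ of the inner factor to vanish at each stage. This is exactly what the inductive hypothesis delivers, so the induction closes cleanly. (One could also remark that $w(P^i)=w(P)^i=1$, though this is not strictly needed for the statement.)
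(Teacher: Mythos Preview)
Your proof is correct and follows essentially the same approach as the paper: both reduce the computation to Formulae (\ref{tbpatternformula}) and (\ref{rotpatternformula}) once one knows the Legendrian and topological satellite constructions agree, then induct. The paper phrases the identification via the fixed fact $\tb(P)=0$ (implicitly using the factorization $P^i=P^{i-1}\star P$), while you use the inductive hypothesis $\tb(P^{i-1})=0$ with the factorization $P^i=P\star P^{i-1}$; either works and the difference is immaterial.
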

\begin{proof} Since $\tb(P)=0$, the Legendrian satellite construction coincides with the usual satellite construction. The rest is an easy consequence of Formulae (\ref{tbpatternformula}) and (\ref{rotpatternformula}).\end{proof}

For the rest of this section, fix a non-slice knot $K$ with a Legendrian diagram realizing $\tb(K)=0$ and $\rot(K)=2\g(K)-1$. There are many examples of such knots, as we mentioned in the introduction. It is easy to see using the (exotic) slice--Bennequin inequality, that such knots have the additional property that $\g(K)=\g_4(K)=\g_4^\text{ex}(K)=\tau(K)=\s(K)$.

\begin{proposition}\label{noteq1}$P^i(K)\neq K$ for any $i>0$, even in exotic concordance, where $P$ is the operator shown in Figure \ref{P}.\end{proposition}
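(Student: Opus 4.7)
The plan is to distinguish $P^i(K)$ from $K$ using the $\tau$-invariant (or equivalently Rasmussen's $s$), applied through the exotic slice--Bennequin inequality of Proposition \ref{slben}. Since the Kronheimer--Mrowka and Ozsv\'ath--Szab\'o genus bounds hold in any homotopy 4--ball, $\tau$ and $\s$ descend to invariants of exotic concordance; hence it suffices to show that $\tau(P^i(K))>\tau(K)$ for every $i>0$.

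First I would build an explicit Legendrian diagram for $P^i(K)$ via the Legendrian satellite operation. Since the Legendrian pattern $P^i$ from Lemma \ref{tbrotpi} has $\tb(P^i)=0$, the Legendrian satellite agrees with the classical untwisted satellite, so substituting $P^i$ into the given Legendrian realization of $K$ (where $\tb(K)=0$ and $\rot(K)=2\g(K)-1$) yields an honest Legendrian representative of $P^i(K)$. Applying the formulae (\ref{tbformula}) and (\ref{rotformula}) together with $w(P^i)=1$, Lemma \ref{tbrotpi}, and the assumed values for $K$, gives
\begin{align*}
\tb(P^i(K)) &= w(P^i)^2\,\tb(K)+\tb(P^i)=0,\\
\rot(P^i(K)) &= w(P^i)\,\rot(K)+\rot(P^i)=2\g(K)-1+2i.
\end{align*}

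Next I would feed this into the exotic slice--Bennequin inequality of Proposition \ref{slben}:
\[
\tb(P^i(K))+|\rot(P^i(K))|\;\leq\; 2\tau(P^i(K))-1,
\]
which rearranges to $\tau(P^i(K))\geq \g(K)+i$. On the other hand, the hypothesis $\tb(K)=2\g(K)-1$ combined with the slice--Bennequin inequality forces $\tau(K)=\g(K)$. Thus for every $i>0$,
\[
\tau(P^i(K))\;\geq\;\g(K)+i\;>\;\g(K)\;=\;\tau(K),
\]
so $P^i(K)$ and $K$ represent different classes in $\mathcal{C}^{\mathrm{ex}}$ (and a fortiori in $\mathcal{C}$).

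There is no serious obstacle here: the setup has been arranged precisely so that the classical satellite formulas for $\tb$ and $\rot$ push the Legendrian numerics of $P^i(K)$ past the $\tau$-value of $K$. The only point requiring a brief justification is that $\tau$ (or $\s$) is an invariant of exotic, not merely smooth, concordance; this is exactly the content of the refined slice--Bennequin inequality in Proposition \ref{slben}, which bounds $g_4^{\mathrm{ex}}$ rather than $g_4$. Once that is in hand, the inequality $\tau(P^i(K))>\tau(K)$ immediately yields the desired conclusion.
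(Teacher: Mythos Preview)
Your argument is correct and coincides with the paper's \emph{alternate} proof of this proposition: compute $\tb$ and $\rot$ of $P^i(K)$ via the Legendrian satellite formulae and Lemma~\ref{tbrotpi}, then feed these into the exotic slice--Bennequin inequality to obtain $\tau(P^i(K))\geq \g(K)+i>\g(K)=\tau(K)$. The paper also records a first proof, using that a single positive-to-negative crossing change turns $P^j(K)$ into $P^{j-1}(K)$ (so $\tau$ is monotone along the sequence by \cite{OzSz03}) together with the strict inequality $\tau(P(K))>\tau(K)$ from \cite{CFHeHo13}.
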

\begin{proof}We can change $P^i(K)$ to $P^{i-1}(K)$ by changing a single positive crossing to a negative crossing. Therefore, by Corollary 1.5 of \cite{OzSz03}, we know that 
$$\tau(P^{i-1}(K))\leq \tau(P^i(K) \leq \tau(P^{i-1}(K))+1$$
and therefore, 
$$\tau(K)\leq \tau(P(K))\leq \tau(P^2(K)) \leq \cdots \leq \tau(P^i(K))$$
Recall that $\tau(\cdot)$ is an invariant of exotic concordance. Therefore, if  $P^i(K)=K$ even in exotic concordance for some $i>0$, then $\tau(P^i(K))=\tau(K)$. But this implies that for all $j$ with $0\leq j \leq i$, $\tau(P^j(K))=\tau(K)$. This contradicts Corollary 3.2 of \cite{CFHeHo13}, which shows exactly that $\tau(P(K))>\tau(K)$.\end{proof}

The following alternate proof uses the technique of the proof of Theorem 3.1 in \cite{CFHeHo13}, which shows that $P(K)\neq K$. 
\begin{proof}[Alternate proof of Proposition \ref{noteq1}]Using Formulae (\ref{tbformula}) and (\ref{rotformula}) and Lemma \ref{tbrotpi} we see that $$\tb(P^i(K))=0$$ $$\rot(P^i(K))=2\g(K)-1+2i$$ since $w(P^i)=1$. By the exotic slice--Bennequin equality, we have that $$0+\lvert 2\g(K)-1+2i \rvert\leq 2\g_4^\text{ex}(P^i(K))-1$$ Note that $g_4^\text{ex}(K)\leq \g(K)$ and $\g(K)\geq 1$. Therefore, $$g_4^\text{ex}(K) + i \leq g_4^\text{ex}(P^i(K))$$ Therefore, for $i>0$, $K \neq P^i(K)$ even in exotic concordance. 

Note that the above process also shows that 
$$\tau(K)+i\leq \tau(P^i(K))$$
and 
$$\s(K)+i\leq \s(P^i(K))$$
using the expanded versions of the exotic slice Bennequin inequality given in Proposition \ref{slben} and since $s(K)\leq 2g_4^\text{ex}(K)$ and $\tau(K)\leq g_4^\text{ex}(K)$ for any knot $K$.\end{proof}

\begin{proposition}\label{noteq2}Given $P$ and $K$ as above, $P^i(K) \neq P^j(K)$ for any $i\neq j$, even in exotic concordance.\end{proposition}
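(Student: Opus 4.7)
The plan is to pin down the invariant $\tau(P^i(K))$ exactly and show it is strictly monotone in $i$, so that $\tau$ alone distinguishes all the iterates in $\mathcal{C}^{\text{ex}}$ (and hence in $\mathcal{C}$). Concretely, I will show $\tau(P^i(K)) = \g(K)+i$ for every $i\geq 0$, by combining the two bounding techniques already used in the two proofs of Proposition \ref{noteq1}.

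For the lower bound, I start from the Legendrian fronts. Since $w(P^i)=1$ and $\tb(P^i)=0$ by Lemma \ref{tbrotpi}, and $K$ has a Legendrian representative with $\tb(K)=0$, $\rot(K)=2\g(K)-1$, Formulae (\ref{tbformula}) and (\ref{rotformula}) produce a Legendrian diagram of $P^i(K)$ with
\[
\tb(P^i(K)) = 0, \qquad \rot(P^i(K)) = 2\g(K)-1 + 2i.
\]
Applying the exotic slice--Bennequin inequality of Proposition \ref{slben} to this diagram immediately gives $\tau(P^i(K)) \geq \g(K)+i$.

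For the upper bound, I use that a single positive crossing change converts $P^i(K)$ into $P^{i-1}(K)$, so Corollary 1.5 of \cite{OzSz03} yields $\tau(P^i(K)) \leq \tau(P^{i-1}(K))+1$. Iterating $i$ times gives $\tau(P^i(K)) \leq \tau(K)+i$, and the slice--Bennequin equality applied to $K$ itself forces $\tau(K)=\g(K)$. Thus $\tau(P^i(K)) \leq \g(K)+i$.

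Combining the two bounds, $\tau(P^i(K)) = \g(K)+i$, which is strictly increasing in $i$. For $i \neq j$ this gives $\tau(P^i(K))\neq \tau(P^j(K))$, and since $\tau$ is an invariant of exotic concordance we conclude $P^i(K)\neq P^j(K)$ in $\mathcal{C}^{\text{ex}}$, and therefore in $\mathcal{C}$. I do not anticipate a real obstacle here: the only thing to verify carefully is that the Legendrian satellite formulas really apply to iterated satellites with the same Thurston--Bennequin and rotation data, but this is automatic because $\tb(P)=0$ (so the Legendrian satellite agrees with the classical one at every stage) and $w(P)=1$ (which kills the $w(P)^2$ term in the $\tb$ formula). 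Moreover, exactly the same argument works with Rasmussen's $s$ in place of $\tau$, giving $\s(P^i(K))=2\g(K)+2i$ as a byproduct, consistent with the formulas advertised in the main theorem.
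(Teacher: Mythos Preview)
Your argument is correct, but it is not the route the paper takes for this proposition. The paper's proof of Proposition~\ref{noteq2} is a two-line reduction: since $P$ is strong winding number one, Theorem~5.1 of \cite{CDR14} says $P$ is injective on $\mathcal{C}^{\text{ex}}$, so $P^i(K)=P^j(K)$ with $i>j$ would force $P^{i-j}(K)=K$, contradicting Proposition~\ref{noteq1}. The exact computation $\tau(P^i(K))=\g(K)+i$ that you carry out is instead the content of the paper's Proposition~\ref{exactnumber}, where the author explicitly notes that it furnishes an alternate proof of Proposition~\ref{noteq2}.

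The two approaches buy different things. Yours is more self-contained (it avoids the injectivity theorem from \cite{CDR14}) and simultaneously yields the exact values of $\tau$, $\g_4$, $\g$ stated in the main theorem. The paper's injectivity argument, on the other hand, is what generalizes cleanly: for an arbitrary strong winding number one operator $S$ with $\tb(S)>0$ and $\tb(S)+\rot(S)\geq 2$, the alternate proof of Proposition~\ref{noteq1} still gives $S^i(K)\neq K$, and then injectivity finishes the job---whereas your upper bound on $\tau$ relies on the specific feature that $P^i(K)\to P^{i-1}(K)$ by a single crossing change, which need not hold for a general such $S$. Your closing remark about $\s$ is plausible but needs a crossing-change inequality for $\s$ analogous to Corollary~1.5 of \cite{OzSz03}; the paper does not invoke one, and the main theorem as stated does not claim a value for $\s(P^i(K))$.
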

\begin{proof}We know from Theorem 5.1 of \cite{CDR14} that $P$ is an injective operator, i.e.\ if $P(J)=P(K)$ in exotic concordance for any two knots $J$ and $K$, we can infer that $J=K$ in exotic concordance. Therefore, if $P^i(K)=P^j(K)$ for some $i>j$, we would have that $P^{i-j}(K)=K$ in exotic concordance, which contradicts Proposition \ref{noteq1}.\end{proof}

We can actually make some stronger statements about the operators $P^i$.

\begin{proposition}\label{exactnumber}Given $P$ and $K$ as above, $$\tau(P^i(K))=\tau(K) + i$$ $$\g_4^{\text{ex}}(P^i(K))=\g_4(P^i(K))=\g_4(K)+i$$ $$\g(P^i(K))=\g(K)+i$$ for all $i\geq 0$. \end{proposition}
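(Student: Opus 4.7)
The plan is to sandwich each invariant between matching lower and upper bounds that coincide at $\g(K)+i$, so that the universal chain $\tau \le \g_4^{\text{ex}} \le \g_4 \le \g$ forces all of them to agree.

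For the lower bound I would reuse the Legendrian calculation already carried out in the alternate proof of Proposition \ref{noteq1}. Combining Lemma \ref{tbrotpi} with Formulae \eqref{tbformula} and \eqref{rotformula}, the explicit Legendrian realization of $P^i(K)$ has $\tb=0$ and $\rot = 2\g(K)-1+2i$ (recall $w(P^i)=1$). Plugging this into the exotic slice--Bennequin inequality of Proposition \ref{slben} yields
$$\g(K)+i \;\le\; \tau(P^i(K)) \qquad\text{and}\qquad \g(K)+i \;\le\; \g_4^{\text{ex}}(P^i(K)).$$
Since the hypothesis on $K$ (together with the slice--Bennequin inequality applied to $K$ itself) gives $\tau(K)=\g_4^{\text{ex}}(K)=\g_4(K)=\g(K)$, these two lower bounds are exactly $\tau(K)+i$ and $\g_4(K)+i$.

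Next I would invoke the universal inequalities $\tau(J) \le \g_4^{\text{ex}}(J) \le \g_4(J) \le \g(J)$, valid for every knot $J$, so that the entire proposition reduces to producing a single matching upper bound on the ordinary Seifert genus,
$$\g(P^i(K)) \;\le\; \g(K)+i.$$
For this I would construct an explicit Seifert surface. Reading $P$ off Figure \ref{P} as a knot in the solid torus $V$, one produces a properly embedded once-punctured torus $\Sigma_P \subseteq V$ with $\partial \Sigma_P = P$ meeting a meridional disk of $V$ transversely in a single point. Gluing $\Sigma_P$ to a minimal-genus Seifert surface for $K$ across that single intersection---i.e.\ performing the satellite construction at the level of surfaces---produces a Seifert surface for $P(K)$ of genus $\g(K)+1$. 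Iterating by means of the monoid operation of Section \ref{satelliteoperators} shows that $P^i$ admits a pattern surface of genus $i$ inside its solid torus, which after pasting in a Seifert surface for $K$ yields the desired bound $\g(P^i(K)) \le \g(K)+i$.

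Chaining everything gives
$$\g(K)+i \;\le\; \tau(P^i(K)) \;\le\; \g_4^{\text{ex}}(P^i(K)) \;\le\; \g_4(P^i(K)) \;\le\; \g(P^i(K)) \;\le\; \g(K)+i,$$
so every quantity equals $\g(K)+i = \tau(K)+i = \g_4(K)+i$. The only non-formal step is the Seifert-surface construction; I expect the main obstacle to be verifying carefully that $\Sigma_P$ really does realize pattern genus one and that the iteration glues together without extra handles, after which both the stacking for $P^i$ and the final chain of inequalities are bookkeeping.
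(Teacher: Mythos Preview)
Your overall strategy is correct and in fact more economical than the paper's. The paper establishes the three equalities separately: it bounds $\tau$ above using the crossing-change inequality of Ozsv\'ath--Szab\'o (one positive crossing change turns $P^i(K)$ into $P^{i-1}(K)$, so $\tau(P^i(K))\le\tau(K)+i$), bounds $\g_4$ above by realizing each crossing change as two band additions, and bounds $\g$ above by the Seifert-surface construction. You instead observe that a single upper bound $\g(P^i(K))\le\g(K)+i$, together with the universal chain $\tau\le\g_4^{\text{ex}}\le\g_4\le\g$ and the common Legendrian lower bound, pins down all four invariants at once. That is a genuine simplification: you avoid invoking the crossing-change bound on $\tau$ and the band-addition argument entirely.

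One point of imprecision to fix: your description of $\Sigma_P$ as a ``properly embedded once-punctured torus with $\partial\Sigma_P=P$'' cannot be right as stated, since $P$ has winding number one and is therefore not null-homologous in $V$; no surface in $V$ has boundary exactly $P$. What you want (and what the paper uses) is a genus-one surface in $V$ with \emph{two} boundary components, one of which is $P$ and the other the longitude $\ell(P)$ of $V$. Under the satellite construction the longitude becomes $K$, and capping with a minimal Seifert surface for $K$ gives the genus-$(\g(K)+1)$ Seifert surface for $P(K)$. Your iteration step then goes through exactly as you say. This is the only repair needed; once the surface is described correctly the rest of your argument is complete.
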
 
Recall that $\g(K)=\g_4(K)=\tau(K)=\s(K)$ for the set of knots we are considering. Therefore, this proposition states that 
\begin{align*}
\tau(P^i(K))&=\g_4^\text{ex}(P^i(K))=\g_4(P^i(K))=\g(P^i(K))\\
&=\g(K)+i=\g_4(K)+i=\g_4^\text{ex}(K)+i=\tau(K)+i
\end{align*}
\begin{proof}[Proof of Proposition \ref{exactnumber}]The first statement is a consequence of Corollary 1.5 of \cite{OzSz03} as follows. We saw in the alternate proof of Proposition \ref{noteq1} that $\tau(K)+i\leq \tau(P^i(K))$. Since we can change $P^i(K)$ to $P^{i-1}(K)$ by changing a single positive crossing, by Corollary 1.5 of \cite{OzSz03}, we have that
$$\tau(P^i(K))\leq\tau(P^{i-1}(K))+1\leq\cdots\leq \tau(K)+i$$
Therefore, $\tau(P^i(K))=\tau(K)+i$. This gives an alternate proof that $P^i(K)\neq P^j(K)$ for $i\neq j$. 

In the alternate proof of Proposition \ref{noteq1}, we also saw that $g_4(K)+i \leq g_4^\text{ex}(P^i(K)) \leq g_4(P^i(K))$. Since $P^i(K)$ and $K$ are related by a sequence of $i$ crossing changes each of which can be accomplished by adding two bands as shown in Figure \ref{bandcrossingchange} we have that, $g_4^\text{ex}(P^i(K))\leq g_4(P^i(K))\leq g_4(K)+i$.

Since $\g_4(J)\leq\g(J)$ for any knot $J$, we must have that $\g(P^i(K))\geq\g_4(P^i(K))=\g_4(K)+i$. Since $\g_4(K)=\g(K)$, we have that $\g(P^i(K))\geq \g(K)+i$. One can construct a Seifert surface for $P^i(K)$ of genus $\g(K)+i$, as pointed out in \cite[Section 3]{CFHeHo13}. For $P(K)$, we can clearly see within the solid torus a genus one surface with two boundary components, one of which is the pattern $P$ and the other is the longitude of the solid torus. We glue this surface to a minimal genus Seifert surface for $K$, to see a Seifert surface for $P(K)$ with genus $\g(K)+1$. Since $\g_4(P(K))=\g(P(K))$, we can proceed by induction.
\end{proof}

\begin{figure}[t]
\centering
\includegraphics[width=4in]{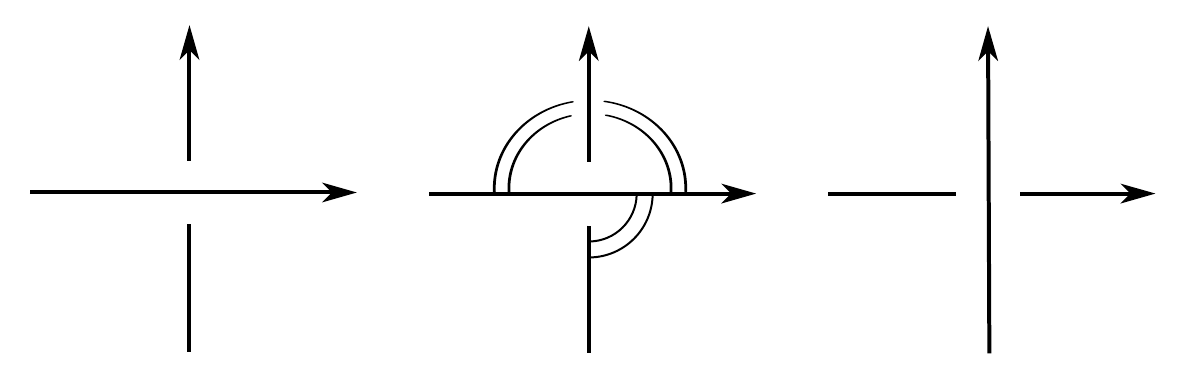}
\caption{Two band additions can effect a crossing change.}\label{bandcrossingchange}
\end{figure}

%

The techniques in the proof of the main theorem can be easily extended to several other patterns. In particular, consider the families of patterns $Q_j$ and $R_j$ shown in Figure \ref{leg_pattern_Qi} and Figure \ref{leg_pattern_Ri} respectively. Note that the pattern $Q_j$ is changed to $Q_{j-1}$ by changing a single positive crossing at the clasp and $Q_1$ is the pattern $P$ from Figure \ref{P}. Similarly, $R_j$ can be changed to $R_{j-1}$ by changing a single positive crossing at the top clasp, and $R_0$ is the identity operator (represented by the core of a solid torus).

\begin{figure}[t]
\centering
\includegraphics[width=5in]{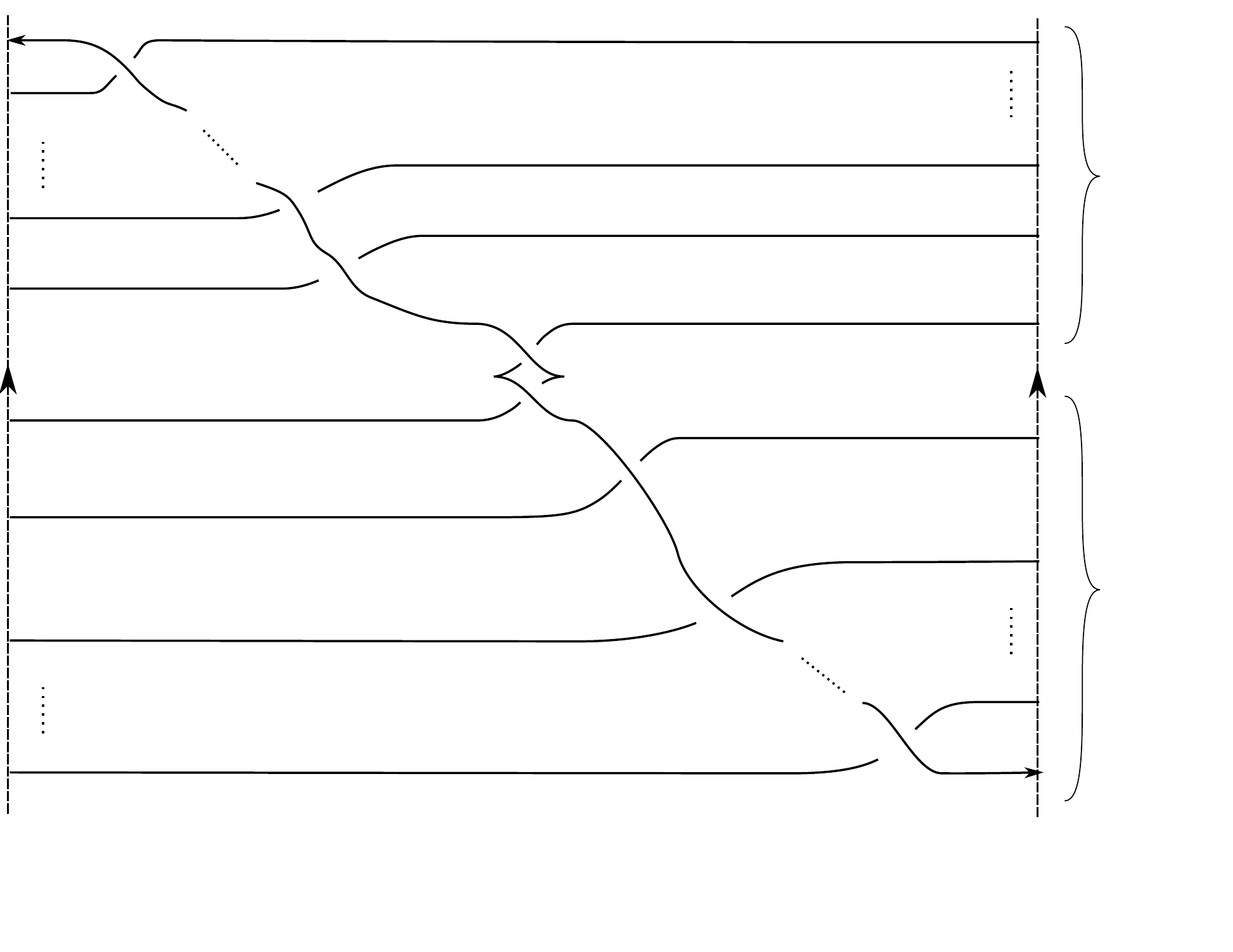}
\put(-0.45,3.15){$j$ strands}
\put(-0.45,1.45){$j+1$ strands}
\put(-2.87,0.4){$\tb(Q_j)=2j$}
\put(-2.87,0.2){$\rot(Q_j)=0$}
\put(-2.83,0){$w(Q_j)=1$}
\caption{A Legendrian diagram for the winding number 1 pattern $Q_j$. Notice that $Q_1$ is the pattern $P$ from Figure \ref{P}.}\label{leg_pattern_Qi}
\end{figure}

\begin{figure}[t]
\centering
\includegraphics[width=2.75in]{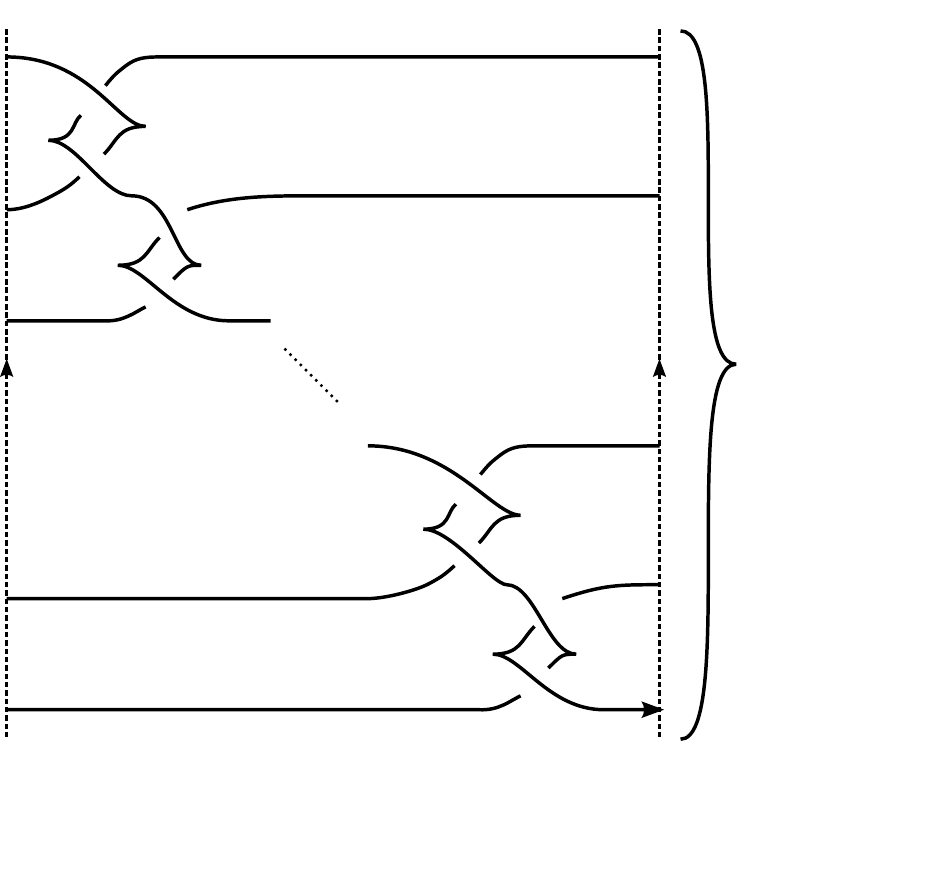}
\put(-0.55,1.5){$2j+1$ strands}
\put(-2.1,0.25){$\tb(R_j)=2j$}
\put(-2.1,0.05){$\rot(R_j)=0$}
\put(-2.05,-0.15){$w(R_j)=1$}
\caption{A Legendrian diagram for the winding number 1 pattern $R_j$. $R_j$ has $2j$ clasps and $R_0$ is identity satellite operator.}\label{leg_pattern_Ri}
\end{figure}

\begin{proposition}\label{exactnumber_QR}For the patterns $Q_j$ and $R_j$ and any $j\geq 0$ (shown in Figures \ref{leg_pattern_Qi} and \ref{leg_pattern_Ri}) and non-slice knots $K$ with Legendrian diagrams realizing $\tb(K)=0$ and $\rot(K)=2\g(K)-1$, we have that 
\begin{align*}
\tau(Q_j(K))&=\g_4^\text{ex}(Q_j(K))=\g_4(Q_j(K))=\g(Q_j(K))\\
&=\g(K)+j=\g_4(K)+j=\g_4^\text{ex}(K)+j=\tau(K)+j
\end{align*}
\begin{align*}
\tau(R_j(K))&=\g_4^\text{ex}(R_j(K))=\g_4(R_j(K))=\g(R_j(K))\\
&=\g(K)+j=\g_4(K)+j=\g_4^\text{ex}(K)+j=\tau(K)+j
\end{align*}

For the iterated satellite operators for $Q_j$ and $R_j$, we obtain the following.
\begin{align*}
\tau(Q_j^i(K))&=\g_4^\text{ex}(Q_j^i(K))=\g_4(Q_j^i(K))=\g(Q_j^i(K))\\
&=\g(K)+ij=\g_4(K)+ij=\g_4^\text{ex}(K)+ij=\tau(K)+ij
\end{align*}
\begin{align*}
\tau(R_j^i(K))&=\g_4^\text{ex}(R_j^i(K))=\g_4(R_j^i(K))=\g(R_j^i(K))\\
&=\g(K)+ij=\g_4(K)+ij=\g_4^\text{ex}(K)+ij=\tau(K)+ij
\end{align*}
\end{proposition}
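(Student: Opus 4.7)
The plan is to run the proof of Proposition \ref{exactnumber} essentially verbatim, using $Q_j$ or $R_j$ in place of $P$, and to use Proposition \ref{exactnumber} itself as a convenient base case for an induction on $j$. Structurally this works because both $Q_j$ and $R_j$ share the Legendrian profile $\tb = 2j$, $\rot = 0$, $w = 1$, and because each is joined to the next member of its family by a single positive crossing change at a clasp --- exactly the ingredients used in the original proof. The single-application statement then bootstraps to the iterates via Formulae (\ref{tbpatternformula})--(\ref{rotpatternformula}).

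First I would plug $\tb(K) = 0$ and $\rot(K) = 2\g(K) - 1$ into Formulae (\ref{tbformula})--(\ref{rotformula}) to obtain $\tb(Q_j(K)) = 2j$ and $\rot(Q_j(K)) = 2\g(K) - 1$. The exotic slice--Bennequin inequality of Proposition \ref{slben} immediately gives
$$\tau(Q_j(K)) \geq \g(K) + j \qquad \text{and} \qquad \g_4^{\text{ex}}(Q_j(K)) \geq \g(K) + j,$$
and the general inequalities $\g_4^{\text{ex}} \leq \g_4 \leq \g$ propagate the same lower bound to $\g_4(Q_j(K))$ and $\g(Q_j(K))$. Identical bounds hold for $R_j(K)$ since it shares the same Legendrian profile.

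For the matching upper bounds, $Q_j$ is related to $Q_{j-1}$ by a single positive crossing change at the clasp, hence so is $Q_j(K)$ to $Q_{j-1}(K)$. Corollary 1.5 of \cite{OzSz03} gives $\tau(Q_j(K)) \leq \tau(Q_{j-1}(K)) + 1$, and $j-1$ iterations down to the base case $Q_1 = P$ handled by Proposition \ref{exactnumber} yield $\tau(Q_j(K)) \leq \tau(K) + j$. Replacing each crossing change by the two-band move of Figure \ref{bandcrossingchange} then gives $\g_4(Q_j(K)) \leq \g_4(K) + j$, and hence the same bound on $\g_4^{\text{ex}}$. For the Seifert genus upper bound, I would exhibit, inside the solid torus, a genus-$j$ surface cobounded by $Q_j$ and the longitude of the solid torus (built from the obvious surface for the $2j$-clasp twist region with its non-essential boundary components capped off), and glue it to a minimum-genus Seifert surface for $K$ to obtain a Seifert surface for $Q_j(K)$ of genus $\g(K) + j$. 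The analogous construction, with $R_j$'s $2j$ clasps arranged differently, handles the $R_j$ family.

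For the iterated patterns, Formulae (\ref{tbpatternformula})--(\ref{rotpatternformula}) applied inductively in $i$ give $\tb(Q_j^i) = 2ij$, $\rot(Q_j^i) = 0$, $w(Q_j^i) = 1$, and similarly for $R_j^i$. The cleanest finish is an induction on $i$: by the inductive hypothesis $\g(Q_j^{i-1}(K)) = \g(K) + (i-1)j$, and the Legendrian satellite construction produces a diagram for $Q_j^{i-1}(K)$ with $\tb = 0$ and $\rot = 2\g(Q_j^{i-1}(K)) - 1$, so applying the single-application result with pattern $Q_j$ to this knot adds exactly $j$ to every invariant and advances the induction. The main obstacle --- and the only genuinely new step beyond Proposition \ref{exactnumber} --- is verifying the genus-$j$ Seifert surface construction for $Q_j$ and $R_j$ inside the solid torus; this is a combinatorial diagram exercise, but it is the one place where the parameter $j$ enters the argument nontrivially.
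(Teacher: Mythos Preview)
Your proposal is correct and is exactly the approach the paper intends --- the paper in fact omits the proof, saying it is virtually identical to that of Proposition~\ref{exactnumber}. One small caveat: Formulae~(\ref{tbpatternformula})--(\ref{rotpatternformula}) compute the invariants of the Legendrian composite $P\cdot Q$, which coincides with $P\star Q$ only when $\tb(Q)=0$, so before iterating you must stabilize $Q_j$ (and likewise $Q_j^{i-1}(K)$) down to $\tb=0$, yielding $\tb(Q_j^i)=0$, $\rot(Q_j^i)=2ij$ rather than the values you wrote; with that adjustment your induction goes through verbatim.
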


We omit the proof of the above proposition since it is virtually identical to the proof of the main theorem. 
We see that the above statements yield the conditions we obtained in the proof of Proposition \ref{exactnumber} for $Q_1=P$, when we set $j=1$.

We also see easily that our proof works for any satellite operators $S$, which are strong winding number one and have Legendrian realizations realizing $\tb(P)>0$ and $\tb(P)+\rot(P)\geq 2$. In short, the proof would consist of using the Alternate proof of Proposition \ref{noteq1} to show that $S^i(K)\neq K$, for any $i\geq 1$ and any non-slice knot $K$ with a Legendrian diagram realizing $\tb(K)=2\g(K)-1$. Then, as in the proof of Proposition \ref{noteq2}, we appeal to Theorem 1, since $S$ is strong winding number one.

Together the results of this section constitute the main theorem. 
\section{Applications}\label{applications}

\begin{cor_1}We can choose $K$ to be topologically slice in the main theorem. This yields an infinite set of topologically slice knots $\{P^i(K)\}$ which are distinct in smooth (and exotic) concordance. \end{cor_1} 

\begin{proof}We can choose $K$ to be a topologically slice knot with a Legendrian realization such that $\tb(K)=2\g(K)-1$, such as the positive untwisted Whitehead double of any knot with this property \cite{Rud95}. If $K$ is topologically slice, then $P^i(K)$ is topologically concordant to $P^i(U)=\widetilde{P^i}$, which is unknotted. Thereore, for any such $K$, we generate an infinite set of smooth concordance classes of topologically slice knots. \end{proof}

\begin{cor_2}For any $L$--space knot $K$ with $\tb(K)>0$, there exist infinitely many prime knots with the same Alexander polynomial as $K$ which are not themselves $L$--space knots.\end{cor_2}

\begin{proof}For $L$--space knots $K$ which satisfy $\tb(K)=2\g(K)-1$ (such as the positive torus knots), this follows very easily from our main theorem. For the operator $P$, note that $\Delta_t(P^i(K))=\Delta_t(\widetilde{P^i})\Delta_t(K) = \Delta_t(K)$ since each $\widetilde{P^i}$ is unknotted \cite[Theorem II]{Seif50}. However, for an $L$--space knot $J$, $\tau(J)$ is equal to the degree of the symmetrized Alexander polynomial. Therefore, since each $P^i(K)$ has the same Alexander polynomial as $K$ but have distinct $\tau$--invariants, they are not $L$--space knots. 

In the more general case for an $L$--space knot with $\tb(K)>0$, we can still stabilize the Legendrian diagram to get a diagram with $\tb(\bar(K))=0$ and rotation number $R$. Using the same techniques as in the proof of the main theorem, we obtain that $$2i+\lvert R \rvert \leq 2\tau(P^i(K))-1$$

This shows that $\{\tau(P^i(K))\}_{i=0}^\infty$ is unbounded and monotone increasing, and that we can find a subsequence which is strictly increasing and bounded below by $\tau(K)$, which completes the proof.

If the patterns $P$ we use are unknotted as knots in $S^3$, the knots $P^i(K)$ are prime (see \cite[Theorem 4.4.1]{Crom04}). \end{proof}

\begin{cor_3}For any operator $P$ in the main theorem, the associated links $(P^i,\eta(P^i))$ yield distinct concordance classes of links with linking number one and unknotted components, which are each distinct from the class of the positive Hopf link. \end{cor_3}

\begin{proof}By Proposition 7.1 of \cite{CDR14}, since the functions $P^i$ (as well as $Q_i$, $R_i$ and their iterates) give non-trivial functions on $\mathcal{C}^\text{ex}$, the corresponding links cannot be smoothly concordant to the positive Hopf link. 
\end{proof}

\bibliographystyle{alpha}
\bibliography{knotbib}

\end{document}